\documentclass[12pt]{amsart}
\usepackage{amsmath, amsfonts, amssymb, amstext, amscd, amsthm, supertabular, multicol}
\usepackage[all,2cell,ps]{xy}
\usepackage{verbatim} 

\newtheorem{3squares}{Proposition}
\newtheorem{congruum}[3squares]{Corollary}
\newtheorem{4squares}[3squares]{Theorem}
\newtheorem{conic}[3squares]{Proposition}
\newtheorem{conicsquares}[3squares]{Theorem}
\newtheorem{4torsion}[3squares]{Proposition}
\newtheorem{symmetry}[3squares]{Proposition}
\newtheorem{coniccongruum}[3squares]{Corollary}

\begin{document}

\title{Arithmetic Progressions on Conic Sections}

\author{Alejandra Alvarado}
\address{Department of Mathematics \\ Purdue University \\ 150 North University Street \\ West Lafayette, IN 47907}
\email{alvaraa@math.purdue.edu}

\author{Edray Herber Goins}
\address{Department of Mathematics \\ Purdue University \\ 150 North University Street \\ West Lafayette, IN 47907}
\email{egoins@math.purdue.edu}

\keywords{Arithmetic Progressions; Conic Sections; Elliptic Curves; Modular Curves; Congruent Number}

\subjclass[2010]{
	11B25, %Arithmetic progressions, 
	11E16, %General binary quadratic forms
	14H52, %	Elliptic curves
}

\maketitle

\begin{abstract}
The set $\{1, 25, 49\}$ is a 3-term collection of integers which forms an arithmetic progression of perfect squares.  We view the set $\bigl \{(1,1), \, (5,25), \, (7,49) \bigr \}$ as a 3-term collection of rational points on the parabola $y=x^2$ whose $y$-coordinates form an arithmetic progression.  In this exposition, we provide a generalization to 3-term arithmetic progressions on arbitrary conic sections $\mathcal C$ with respect to a linear rational map $\ell: \mathcal C \to \mathbb P^1$.  We explain how this construction is related to rational points on the universal elliptic curve $Y^2 + 4 \, X \, Y + 4 \, k \, Y = X^3 + k \, X^2$ classifying those curves possessing a rational 4-torsion point. \end{abstract}

\section{Introduction}
An $n$-term arithmetic progression is a collection of rational numbers $\{\ell_1, \ell_2, ...,  \ell_n \}$ such that there is a common difference $\delta = \ell_{i+1}-\ell_i$.   The set $\{1, 25, 49\}$ is a 3-term collection of integers which forms an arithmetic progression of perfect squares.  We view the set $\bigl \{(1:1:1), \, (5:25:1), \, (7:49:1) \bigr \}$ as a 3-term collection of rational points $(x:y:1)$ on the parabola $y=x^2$ whose $y$-coordinates form an arithmetic progression.  It is well-known that there are infinitely many such progressions of points on the parabola:
\[ \{ P_1, \, P_2, \, P_3 \} = \left \{ \begin{matrix}
\left( t^2 - 2 \, t - 1 \, : \, \bigl( t^2 - 2 \, t - 1 \bigr)^2 \, : \, 1 \right), \\[10pt]
\left( t^2 + 1 \, : \, \bigl( t^2 + 1 \bigr)^2 \, : \, 1 \right), \\[10pt]
\left( t^2 + 2 \, t - 1 \, : \, \bigl( t^2 + 2 \, t - 1 \bigr)^2 \, : \, 1 \right)
\end{matrix} \right \}. \]

\noindent In this article, we consider the question of forming a 3-term arithmetic progression on an arbitrary conic section.

Here is our main result.  We will say that $\{ P_1, \, P_2, \, P_3 \} \subseteq \mathbb P^2(K)$ forms an arithmetic progression on a conic section
\[ \mathcal C = \left \{ (x_1: x_2 : x_0) \in \mathbb P^2 \ \left| \ \begin{aligned} A \, x_1^2 & + 2 \, B \, x_1 \, x_2 + C \, x_2^2 \\ & + 2 \, D \, x_1 \, x_0 + 2 \, E \, x_2 \, x_0 + F \, x_0^2 = 0 \end{aligned} \right. \right \} \]

\noindent with respect to a linear rational map 
\[ \begin{CD} \mathcal C(K) @>{\ell}>> \mathbb P^1(K), \qquad (x_1 : x_2 : x_0) \mapsto \dfrac {a \, x_1 + b \, x_2 + c \, x_0}{d \, x_1 + e \, x_2 + f \, x_0} \end{CD} \]

\noindent defined over a field $K$ of characteristic different from 2 if there is a common difference $\delta = \ell(P_2) - \ell(P_1) = \ell(P_3) - \ell(P_2)$.  Consider those $t_0 \in K$ such that $\sqrt{\text{Disc}(t_0)} \in K$ and
\[ k = \dfrac {\text{Disc}'(t_0)^2 - 2 \, \text{Disc}(t_0) \, \text{Disc}''(t_0)}{\text{Disc}'(t_0)^2} \neq 0, \, 1, \, \infty \]

\noindent in terms of the discriminant
\[ \begin{aligned} & \text{Disc}(t) \\ & \quad = \left[ \begin{matrix} a-d \, t \\ b-e \, t \\ c-f \, t \end{matrix} \right]^T \left[ \begin{matrix} E^2 - C \, F & B \, F - D \, E & C \, D - B \, E \\ B \, F - D \, E & D^2 - A \, F & A \, E - B \, D \\ C \, D - B \, E & A \, E - B \, D & B^2 - A \, C \end{matrix} \right] \left[ \begin{matrix} a-d \, t \\ b-e \, t \\ c-f \, t \end{matrix} \right]. \end{aligned} \]

\noindent For each $K$-rational point $(X:Y:1)$ on the elliptic curve
\[ \mathcal E_k: \quad Y^2 + 4 \, X \, Y + 4 \, k \, Y = X^3 + k \, X^2 \]

\noindent the desired set is
\[ \{ P_1, \, P_2, \, P_3 \} = \left \{ \begin{matrix} \bigl( x_1(t_0 - \delta):x_2(t_0-\delta):x_0(t_0-\delta) \bigr), \\[10pt] \bigl( x_1(t_0):x_2(t_0):x_0(t_0) \bigr), \\[10pt] \bigl( x_1(t_0 + \delta):x_2(t_0+\delta):x_0(t_0+\delta) \bigr) \end{matrix} \right \} \]

\noindent in terms of the coordinates

\[ \begin{aligned}
x_1(t) & = B \, (b-e \, t) \, (c-f \, t) - C\, (a-d \, t) \, (c-f \, t) \\ & - D \, (b-e \, t)^2 + E \, (a-d \, t) \, (b-e \, t) \pm (b - e \, t) \, \sqrt{\text{Disc}(t)}, \\[5pt]
x_2(t) & = -A \, (b - e \, t) \, (c - f \, t ) + B \, (a-d \, t ) \, (c-f \, t ) \\ & + D \, (a-d \, t ) \, (b-e \, t) - E \, (a-d \, t)^2 \mp (a-d \, t)  \, \sqrt{\text{Disc}(t)}, \\[5pt]
x_0(t) & = A \, (b-e \, t )^2 - 2 \, B \, (a-d \, t) \, (b-e \, t) + C \, (a-d \, t )^2;
\end{aligned} \]

\noindent and the common difference
\[ \delta = - \dfrac {\text{Disc}(t_0)}{\text{Disc}'(t_0)} \, \dfrac {4 \, X \, Y}{Y^2 + 2 \, X \, Y + k \, X^2}. \]

\noindent For example, the special case of squares in arithmetic progression follows from consideration of the parabola $\mathcal C: y = x^2$ and the linear polynomial $\ell(x, y, 1) = y$, so that $\text{Disc}(t) = t$.  We may choose $t_0 = 1$ to find the previously mentioned 3-term progression of squares.

The condition $\sqrt{\text{Disc}(t_0)} \in K$ is both necessary and sufficient for the existence of points $P_0 \in \mathcal C(K)$ such that $\ell(P_0) = t_0$, while the condition $k \neq 0, \, 1, \, \infty$ is both necessary and sufficient for $\mathcal E_k$ to be an elliptic curve.  Surprisingly, this curve is universal in that is classifies those elliptic curves possessing a rational 4-torsion point.

\section{Squares in Arithmetic Progression}

Let $K$ be a field of characteristic different from 2, which in practice is either $\mathbb Q$ or a quadratic extension $\mathbb Q(\sqrt{k})$ thereof.  We will say that a collection $\{ y_1, \, y_2, \, ..., \, y_n \} \subseteq K$ is an $n$-term arithmetic progression of squares over $K$ if 
\begin{enumerate}
\item[(i)] each $y_i=x_i^2$ for some $x_i \in K$, and 
\item[(ii)] there is a common difference $\delta = y_{i+1}-y_i$.
\end{enumerate}
\noindent For example, the set $\{1, 25, 49\}$ is a 3-term collection of integers which forms an arithmetic progression, where the common difference is $\delta = 24$.  Most of the results which are contained in this section are well-known in the literature.  See for instance Gonz{\'a}lez-Jim{\'e}nez and Steuding \cite{GonzalezJimenez:2010}, Gonz{\'a}lez-Jim{\'e}nez and Xarles \cite{GonzalezJimenez:2009vja}, and Xarles \cite{Xarles:2009tc}.  We include these results in the exposition at hand in order to both include simplified proofs and motivate generalizations.

\subsection{Three Squares in Arithmetic Progression}

Let us first classify the 3-term case.

\begin{3squares} \label{3squares} $\{ y_1, \, y_2, \, y_3\} \subseteq K$ are three squares in an arithmetic progression if and only if there exists $t \in K$ such that
\[ \bigl( y_1 \, : \, y_2 \, : \, y_3 \bigr) = \left( \bigl( t^2 - 2 \, t - 1 \bigr)^2 \, : \, \bigl( t^2 + 1 \bigr)^2 \, : \, \bigl( t^2 + 2 \, t - 1 \bigr)^2 \right). \] \end{3squares}

\begin{proof} Set $y_1 = x_1^2$, $y_2 = x_2^2$, and $y_3 = x_3^2$.  We will use the following equations:
\[ \left. \begin{aligned} x_1^2 - 2 \, x_2^2 + x_3^2 & = 0 \\[5pt] \dfrac {x_1 - x_3}{x_1 - 2 \, x_2 + x_3} & = t \end{aligned} \right \} \qquad \iff \qquad \left \{ \begin{aligned} \dfrac {x_1}{x_2} & = \dfrac {t^2 - 2 \, t - 1}{t^2 + 1} \\[5pt] \dfrac {x_3}{x_2} & = \dfrac {t^2 + 2 \, t - 1}{t^2 + 1} \end{aligned} \right. \]
\noindent If the ratio $(y_1 : y_2 : y_3)$ is as above, then $\delta = 4 \, (t^3 - t) / (t^2 + 1)^2 \cdot x_2^2$ is the common difference.  Conversely, $\{ y_1, \, y_2, \, y_3\} \subseteq K$ are three squares in an arithmetic progression, then the ratio $(y_1 : y_2 : y_3)$ is as above for $t = (x_1 - x_3) / (x_1 - 2 \, x_2 + x_3)$.  \end{proof}

\noindent There are only certain common differences $\delta$ which can occur for such an arithmetic progression.

\begin{congruum} \label{congruum} The following are equivalent for each nonzero $\delta \in K$.
\begin{enumerate} 
\item There exist $x_1, \, x_2, \, x_3 \in K$ such that $x_2^2 - \delta = x_1^2$ and $x_2^2 + \delta = x_3^2$.
\item There exist $X, \, Y \in K$ with $Y \neq 0$ such that $Y^2 = X^3 - \delta^2 \, X$.
\item There exist $a, \, b, \, c \in K$ such that $a^2 + b^2 = c^2$ and $(1/2) \, a\, b = \delta$.
\end{enumerate}
\end{congruum}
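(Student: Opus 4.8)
The plan is to prove the equivalence as a cycle of implications $(1) \Rightarrow (3) \Rightarrow (2) \Rightarrow (1)$, exhibiting an explicit substitution at each step and verifying the required identity by direct algebra. I would deliberately route the argument in this order rather than passing three squares $x_1^2, x_2^2, x_3^2$ in progression directly to the point $(X,Y) = (x_2^2, \, x_1 x_2 x_3)$ on $\mathcal E$, because that naive map can produce $Y = 0$; threading instead through Pythagorean triples lets the single hypothesis $\delta \neq 0$ guarantee that every denominator is invertible and that the resulting point has $Y \neq 0$, with no recourse to special features of $K$.

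For $(1) \Rightarrow (3)$ I would set $a = x_1 + x_3$, $b = x_3 - x_1$, $c = 2 \, x_2$; then $a^2 + b^2 = 2 \, x_1^2 + 2 \, x_3^2 = 4 \, x_2^2 = c^2$ and $a \, b = x_3^2 - x_1^2 = 2 \, \delta$, so in particular $a, b \neq 0$ --- a consequence of $\delta \neq 0$ that I will reuse rather than assume. For $(3) \Rightarrow (2)$, since $a^2 + b^2 = c^2$ with $b \neq 0$ forces $c \neq a$, I would set
\[ X = \frac{\delta \, b}{c - a}, \qquad Y = \frac{2 \, \delta^2}{c - a}, \]
and check, using $b^2 - (c-a)^2 = (b^2 - c^2) + 2 \, a \, c - a^2 = 2 \, a \, (c-a)$, that $X^2 - \delta^2 = 2 \, a \, \delta^2/(c-a)$ and hence $X^3 - \delta^2 \, X = 2 \, a \, b \, \delta^3/(c-a)^2 = 4 \, \delta^4/(c-a)^2 = Y^2$, with $Y \neq 0$ because $\delta \neq 0$.

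For $(2) \Rightarrow (1)$, given $(X,Y)$ on $\mathcal E$ with $Y \neq 0$, I would take the coordinates dictated by the duplication formula on $Y^2 = X^3 - \delta^2 \, X$, namely
\[ x_2 = \frac{X^2 + \delta^2}{2 \, Y}, \qquad x_1 = \frac{X^2 - 2 \, \delta \, X - \delta^2}{2 \, Y}, \qquad x_3 = \frac{X^2 + 2 \, \delta \, X - \delta^2}{2 \, Y}, \]
and verify by factoring a difference of squares that $x_2^2 - x_1^2 = 4 \, \delta \, (X^3 - \delta^2 \, X)/(4 \, Y^2) = \delta$ and likewise $x_3^2 - x_2^2 = \delta$; all three lie in $K$ since $Y \neq 0$. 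This closes the cycle.

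I do not anticipate a genuine obstacle here: this is the classical congruent-number correspondence, and the content is entirely in the choice of the three rational maps together with careful bookkeeping of which quantities are nonzero. The one point deserving attention is exactly that bookkeeping --- in particular, choosing $X = \delta \, b/(c-a)$ rather than $X = (c/2)^2$ so that the lone hypothesis $\delta \neq 0$ suffices over an arbitrary field of characteristic different from $2$.
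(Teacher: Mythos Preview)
Your proof is correct and overlaps substantially with the paper's: your $(3)\Rightarrow(2)$ substitution $X=\delta b/(c-a)$, $Y=2\delta^2/(c-a)$ and your $(2)\Rightarrow(1)$ formulas for $x_1,x_2,x_3$ are precisely the maps the paper writes down. The difference is organizational. The paper establishes the two biconditionals $(1)\Leftrightarrow(2)$ and $(2)\Leftrightarrow(3)$ by exhibiting each birational map together with its explicit inverse (so four substitutions in all, the $(1)\to(2)$ map being $X=(x_1-x_3)\delta/(x_1-2x_2+x_3)$, $Y=-2\delta^2/(x_1-2x_2+x_3)$), but it does not pause to check that denominators such as $x_1-2x_2+x_3$ or $c-a$ are nonzero. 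You instead close the cycle $(1)\Rightarrow(3)\Rightarrow(2)\Rightarrow(1)$, and your direct step $(1)\Rightarrow(3)$ via $a=x_1+x_3$, $b=x_3-x_1$, $c=2x_2$ is simpler than the composite one would extract from the paper and lets you track, from the single hypothesis $\delta\neq 0$, that every subsequent denominator is invertible and that the resulting point really has $Y\neq 0$. What the paper's arrangement buys is the explicit inverse in each direction; what yours buys is tighter bookkeeping of the degenerate cases over an arbitrary $K$.
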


\begin{proof} We found above that there exist $x_1, \, x_2, \, x_3 \in K$ such that $\delta = x_2^2 - x_1^2 = x_3^2 - x_2^2$ if and only if $\delta = 4 \, (t^3 - t) / (t^2 + 1)^2 \cdot x_2^2$ for some $t \in K$.  This motivates the following transformation:
\[ \left. \begin{aligned} X & = \dfrac {(x_1 - x_3) \, \delta}{x_1 - 2 \, x_2 + x_3} \\[5pt] Y & = - \dfrac {2 \, \delta^2}{x_1 - 2 \, x_2 + x_3} \end{aligned} \right \} \qquad \iff \qquad \left \{ \begin{aligned} x_1 & = \dfrac {X^2 - 2 \, \delta \, X - \delta^2}{2 \, Y} \\[5pt] x_2 & = \dfrac {X^2 + \delta^2}{2 \, Y} \\[5pt] x_3 & = \dfrac {X^2 + 2 \, \delta \, X - \delta^2}{2 \, Y} \end{aligned} \right. \]
\noindent Hence $\delta = x_2^2 - x_1^2 = x_3^2 - x_2^2$ if and only if $Y^2 = X^3 - \delta^2 \, X$ for some nonzero $Y$. Similarly, we have the following transformation:
\[ \left. \begin{aligned} a & = \dfrac {X^2 - \delta^2}{Y} \\[5pt] b & = \dfrac {2 \, \delta \, X}{Y} \\[5pt] c & = \dfrac {X^2 + \delta^2}{Y} \end{aligned} \right \} \qquad \iff \qquad \left \{ \begin{aligned} X & = \dfrac {b \, \delta}{c-a} \\[5pt] Y & = \dfrac {2\, \delta^2}{c-a} \end{aligned} \right. \]

\noindent Hence $Y^2 = X^3 - \delta^2 \, X$ for some nonzero $Y$ if and only if $a^2 + b^2 = c^2$ and $a \, b = 2 \, \delta$. \end{proof}

\noindent Any nonzero $\delta \in K$ satisfying the equivalent criteria above is called a \emph{congruum}, or even a \emph{congruent number}, although this notation is perhaps more standard for $K = \mathbb Q$.  Chandrasekar \cite{Chandrasekar:1998jw} and Izadi \cite{Izadi:2010vs} have written a nice summary of the history of the problem of determining when $\delta \in \mathbb Z_{>0}$ is a congruent number.  This notation comes from Fibonacci's 1225 work \emph{Liber Quadratorum}, where he shows $\delta = 5, \, 6, \, 7$ are congruent numbers, although similar results appeared much earlier with Al-Karaji some 200 years before.  In 1640, Fermat showed that $\delta = 1, \, 2, \, 3, \, 4$ are not congruent numbers.  In 1972, Alter, Curtz, and Kubota \cite{Alter:1972ur} conjectured that if $\delta$ is an integer congruent to $5, \, 6, \, 7 \pmod{8}$ then $\delta$ is a congruent number.  In 1983, Tunnell \cite{Tunnell:1983ki} found a necessary and sufficient condition for $\delta$ to be a congruent number and translated this into properties of the quadratic twists of the elliptic curve $Y^2 = X^3 - X$.  For example, if $\delta \equiv 5, \, 6, \, 7 \pmod{8}$ then $Y^2 = X^3 - \delta^2 \, X$ should have positive rank.

\subsection{Four or More Squares in Arithmetic Progression}

Fermat's work is not limited to congruent numbers.  He also stated that there are no non-constant $n$-term arithmetic progressions whose terms are perfect squares over $\mathbb Q$ if $n\geq 4$.  Euler gave a rigorous proof of this claim in 1780.  We recast this in the modern language of elliptic curves.

\begin{4squares} \label{4squares} Let $K$ be a field of characteristic different from 2.
\begin{enumerate}
\item $\{ y_1, \, y_2, \, y_3, \, y_4 \} \subseteq K$ are four squares in an arithmetic progression if and only if the elliptic curve $Y^2 = X^3 + 5 \, X^2 + 4 \, X$ has a $K$-rational point $(X:Y:1)$.
\item If point $(X : Y : 1)$ is a $K$-rational point on the quadratic twist $Y^2 = X^3 + 5 \, k \, X^2 + 4 \, k^2 \, X$, then the following are four squares in an arithmetic progression over $K(\sqrt{k})$:
\[ \begin{aligned} 
y_1 & = \left( 3 \, k \, X \, (2 \, k + X) - 2 \, \sqrt{k} \, Y \, (2 \, k - X) \right)^2 \\
y_2 & = \left(      k \, X \, (2 \, k - X) - 2 \, \sqrt{k} \, Y \, (2 \, k + X) \right)^2 \\ 
y_3 & = \left(      k \, X \, (2 \, k - X) + 2 \, \sqrt{k} \, Y \, (2 \, k + X) \right)^2 \\
y_4 & = \left( 3 \, k \, X \, (2 \, k + X) + 2 \, \sqrt{k} \, Y \, (2 \, k - X) \right)^2
\end{aligned} \]
\item If $\{ y_1, \, y_2, \,\dots, \, y_n \} \subseteq \mathbb Q$ are  $n \geq 4$ squares in an arithmetic progression over $\mathbb Q$, then $(y_1 : y_2 : \cdots : y_n) = (1:1:\cdots:1)$.
\end{enumerate}
\end{4squares}

\begin{proof} Set $y_1 = x_1^2$, $y_2 = x_2^2$, $y_3 = x_3^2$, and $y_4 = x_4^2$.  For the first statement, use the transformation
\[ \left. \begin{aligned}
x_1^2 - 3 \, x_2^2 + 3 \, x_3^2 - x_4^2 & = 0 \\[5pt]
2 \, \dfrac {x_1 - 3 \, x_2 - 3 \, x_3 + x_4}{x_1 + 3 \, x_2 + 3 \, x_3 + x_4} & = X \\[5pt]
6 \, \dfrac {x_1 - x_2 + x_3 - x_4}{x_1 + 3 \, x_2 + 3 \, x_3 + x_4} & = Y
\end{aligned} \right \} \iff \left \{ \begin{aligned}
\dfrac {x_1}{x_4} & = \dfrac {6 \, X + 3 \, X^2 - 2 \, Y + X \, Y}{6 \, X + 3 \, X^2 + 2 \, Y - X \, Y} \\[5pt]
\dfrac {x_2}{x_4} & = \dfrac {2 \, X -      X^2 - 2 \, Y - X \, Y}{6 \, X + 3 \, X^2 + 2 \, Y - X \, Y} \\[5pt]
\dfrac {x_3}{x_4} & = \dfrac {2 \, X -      X^2 + 2 \, Y + X \, Y}{6 \, X + 3 \, X^2 + 2 \, Y - X \, Y}
\end{aligned} \right. \]
\noindent The equations $x_1^2 - 2 \, x_2^2 + x_3^2 = x_2^2 - 2 \, x_3^2 + x_4^2 = 0$ are equivalent to the one equation $Y^2 = X^3 + 5 \, X^2 + 4 \, X$.  For the second statement, use the fact that $\bigl( k \, U :  \sqrt{k} \, V : k^2 \bigr)$ is a $K(\sqrt{k})$-rational point on the elliptic curve $Y^2 = X^3 + 5 \, X^2 + 4 \, X$ whenever $(U:V:1)$ is a $K$-rational point on the quadratic twist $V^2 = U^3 + 5 \, k \, U^2 + 4 \, k^2 \, U$.  For the last statement, we use the rational points in Table \ref{points}: the right-hand column is the exhaustive list of all $\mathbb Q$-rational points $(X:Y:1)$ on $Y^2 = X^3 + 5 \, X^2 + 4 \, X$.  \end{proof}

\begin{table}[h] \caption{Rational Points on $Y^2 = X^3 + 5 \, X^2 + 4 \, X$} \label{points} 
\begin{center} \begin{tabular}{|c|c|}
\hline $(x_1 : x_2 : x_3 : x_4)$ & $(X : Y : 1)$ \\ \hline
$(-1 : -1 : +1 :+1)$ & $(0:1:0)$ \\
$(-1 : +1 : -1 :+1)$ & $(0:0:1)$ \\
$(-1 : -1 : -1 :+1)$ & $(-2:+2:1)$ \\
$(-1 : +1 : +1 :+1)$ & $(-2:-2:1)$ \\
$(+1 : +1 : +1 :+1)$ & $(-1:0:1)$ \\
$(+1 : -1 : -1 :+1)$ & $(-4:0:1)$ \\
$(+1 : +1 : -1 :+1)$ & $( 2:+6:1)$ \\
$(+1 : -1 : +1 :+1)$ & $( 2:-6:1)$ \\ \hline
\end{tabular} \end{center} 
\end{table}

The elliptic curve $Y^2 = X^3 + 5 \, X^2 + 4 \, X$ is the same as the modular curve $X_0(24)$, where $X_0 \bigl( 24 \bigr)(\mathbb Q) \simeq Z_2 \times Z_4$ is a finite group.  (We will provide a formal definition of modular curves in the next section.)  The results above give a sufficient condition for there to be nonconstant four-term arithmetic progressions whose terms are perfect squares $K = \mathbb Q(\sqrt{k})$ by considering $K$-rational points on $X_0(24)^{(k)}$.  For example, when $k = 6$, the quadratic twist $Y^2 = X^3 + 5 \, k \, X^2 + 4 \, k^2 \, X$ has a $\mathbb Q$-rational point $(X:Y:1) = (-8:-16:1)$, so that we find an arithmetic progression of four squares over $K = \mathbb Q(\sqrt{6})$:
\[ \begin{aligned} & \bigl( y_1 : y_2 : y_3 : y_4 \bigr) \\ & \qquad = \bigl( (9-5 \, \sqrt{6})^2 \ : \ (15 - \sqrt{6})^2 \ : \ (15 + \sqrt{6})^2 \ : \ (9 + 5 \, \sqrt{6})^2 \bigr). \end{aligned} \]
\noindent However, not all arithmetic progressions of four squares over $K = \mathbb Q(\sqrt{k})$ are in this form.  In 2009, Gonz{\'a}lez-Jim{\'e}nez and Xarles \cite{GonzalezJimenez:2009vja} found the following arithmetic progression of five squares over $K = \mathbb Q(\sqrt{409})$:
\[ (y_1 : y_2 : y_3 : y_4: y_5) = \bigl( 7^2 \ : \ 13^2 \ : \ 17^2 \ : \ (\sqrt{409})^2\ : \ 23^2 \bigr). \]

\section{Arithmetic Progressions on Conic Sections}

The set $\{1, 25, 49\}$ is a 3-term collection of integers which forms an arithmetic progression.  We view the set $\bigl \{(1:1:1), \, (5:25:1), \, (7:49:1) \bigr \}$ as a 3-term collection of rational points $(x:y:1)$ on the parabola $y=x^2$ whose $y$-coordinates form an arithmetic progression.  This motivates a natural generalization of arithmetic progressions on conics.  Consider an arbitrary conic section as viewed in the projective plane:
\[ \mathcal C = \left \{ (x_1: x_2 : x_0) \in \mathbb P^2 \ \left| \ \begin{aligned} A \, x_1^2 & + 2 \, B \, x_1 \, x_2 + C \, x_2^2 \\ & + 2 \, D \, x_1 \, x_0 + 2 \, E \, x_2 \, x_0 + F \, x_0^2 = 0 \end{aligned} \right. \right \}. \]

\noindent Upon fixing a linear rational map $\ell$, we have a map
\[ \begin{CD} \mathcal C(K) @>{\ell}>> \mathbb P^1(K), \qquad (x_1 : x_2 : x_0) \mapsto \dfrac {a \, x_1 + b \, x_2 + c \, x_0}{d \, x_1 + e \, x_2 + f \, x_0}. \end{CD} \]
We will call a set $\bigl \{ P_1, \, P_2, \, \dots, \, P_n \bigr \} \subseteq \mathbb P^2(K)$ an \emph{arithmetic progression on $\mathcal C$ with respect to $\ell$} if 
\begin{itemize}
\item[(i)] each $P_i \in \mathcal C(K)$ is a point on the conic section, and 
\item[(ii)] there is a common difference $\delta = \ell(P_{i+1}) - \ell(P_i)$.
\end{itemize}
For \emph{squares} in an arithmetic progression we would choose the parabola $\mathcal C: y = x^2$ and the linear polynomial $\ell(x,y,1) = y$.  We are interested in the intersection of a line $\ell(x_1, x_2, x_0) = t$ with the conic $\mathcal C(K)$ where we allow $t \in K$ to vary. 

\begin{conic} \label{conic} Fix a conic section
\[ \mathcal C: \quad A \, x^2 + 2 \, B \, x \, y + C \, y^2 + 2 \, D \, x + 2 \, E \, y + F = 0 \]
\noindent and a linear rational map $\ell(x,y,1) = \bigl( a \, x + b \, y + c \bigr)/\bigl(d \, x + e \, y + f\bigr)$ over a field $K$ of characteristic different from 2.  For any $t \in K$, define the discriminant via the quadratic polynomial
\[ \begin{aligned} & \text{Disc}(t) \\ & \quad = \left[ \begin{matrix} a-d \, t \\ b-e \, t \\ c-f \, t \end{matrix} \right]^T \left[ \begin{matrix} E^2 - C \, F & B \, F - D \, E & C \, D - B \, E \\ B \, F - D \, E & D^2 - A \, F & A \, E - B \, D \\ C \, D - B \, E & A \, E - B \, D & B^2 - A \, C \end{matrix} \right] \left[ \begin{matrix} a-d \, t \\ b-e \, t \\ c-f \, t \end{matrix} \right]. \end{aligned} \]
\noindent If there is a point $P = (x_1 : x_2 : x_0)$ in $\mathcal C(K)$ satisfying $\ell(P) = t$, then $\sqrt{\text{Disc}(t)} \in K$, and the coordinates must be of the form
\[ \begin{aligned}
x_1(t) & = B \, (b-e \, t) \, (c-f \, t) - C\, (a-d \, t) \, (c-f \, t) \\ & - D \, (b-e \, t)^2 + E \, (a-d \, t) \, (b-e \, t) \pm (b - e \, t) \, \sqrt{\text{Disc}(t)}, \\[5pt]
x_2(t) & = -A \, (b - e \, t) \, (c - f \, t ) + B \, (a-d \, t ) \, (c-f \, t ) \\ & + D \, (a-d \, t ) \, (b-e \, t) - E \, (a-d \, t)^2 \mp (a-d \, t)  \, \sqrt{\text{Disc}(t)}, \\[5pt]
x_0(t) & = A \, (b-e \, t )^2 - 2 \, B \, (a-d \, t) \, (b-e \, t) + C \, (a-d \, t )^2.
\end{aligned} \]
\noindent Conversely, if $\sqrt{\text{Disc}(t)} \in K$, then $P = \bigl( x_1(t) : x_2(t) : x_0(t) \bigr)$ is a point in $\mathcal C(K)$ satisfying $\ell(P) = t$.
\end{conic}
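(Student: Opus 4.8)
The plan is to read the constraint $\ell(P)=t$ as a linear condition and then intersect the resulting line with the conic. Write $M = \left[ \begin{smallmatrix} A & B & D \\ B & C & E \\ D & E & F \end{smallmatrix} \right]$ for the symmetric matrix of $\mathcal C$, so that $\mathcal C = \{ \mathbf x \in \mathbb P^2 : \mathbf x^T M \mathbf x = 0 \}$, and note that the $3 \times 3$ matrix appearing in $\text{Disc}(t)$ is exactly $-\operatorname{adj}(M)$. For $\mathbf x = (x_1 : x_2 : x_0)$ with $d\,x_1 + e\,x_2 + f\,x_0 \neq 0$, the equation $\ell(\mathbf x)=t$ is equivalent to $\mathbf v_t^T \mathbf x = 0$ where $\mathbf v_t = (a-d\,t,\ b-e\,t,\ c-f\,t)^T$; that is, $P$ lies on the line $L_t$ with coefficient vector $\mathbf v_t$. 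Hence the set of $P \in \mathcal C(K)$ with $\ell(P)=t$ is the $K$-rational part of the line--conic intersection $L_t \cap \mathcal C$. (One treats separately the degenerate situation $a-d\,t = b-e\,t = 0$, in which $L_t$ is the line at infinity; there $\text{Disc}(t) = (B^2 - A\,C)(c-f\,t)^2$, and the assertion follows from the quadratic formula applied to $A\,x_1^2 + 2\,B\,x_1\,x_2 + C\,x_2^2 = 0$.)

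Assume from now on that $(a-d\,t,\ b-e\,t) \neq (0,0)$. I would parametrize $L_t$ by the two points $\mathbf a = (b-e\,t,\ -(a-d\,t),\ 0)^T$ and $\mathbf u = (p,q,1)^T$, where $\mathbf u$ is any point of $L_t$ with nonzero last coordinate; both plainly satisfy $\mathbf v_t^T(\,\cdot\,)=0$. A direct computation gives the crucial relation $\mathbf u \times \mathbf a = \mathbf v_t$ (using $\mathbf v_t^T \mathbf u = 0$ to simplify the third coordinate). Substituting $\mathbf x = u\,\mathbf u + s\,\mathbf a$ into $\mathbf x^T M \mathbf x = 0$ produces the binary quadratic form
\[ (\mathbf u^T M \mathbf u)\,u^2 + 2\,(\mathbf u^T M \mathbf a)\,u\,s + (\mathbf a^T M \mathbf a)\,s^2 = 0 \]
in $[s:u]$, whose $\mathbb P^1(K)$-points are precisely the $K$-rational points of $L_t \cap \mathcal C$; such a point exists if and only if its discriminant $(\mathbf u^T M \mathbf a)^2 - (\mathbf u^T M \mathbf u)(\mathbf a^T M \mathbf a)$ is a square in $K$.

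It remains to identify this discriminant with $\text{Disc}(t)$. Here I would invoke the identity $(M\mathbf u)\times(M\mathbf a) = \operatorname{adj}(M)\,(\mathbf u \times \mathbf a)$, valid for symmetric $M$, together with $(\mathbf u \times \mathbf a)\cdot\bigl[(M\mathbf u)\times(M\mathbf a)\bigr] = (\mathbf u^T M \mathbf u)(\mathbf a^T M \mathbf a) - (\mathbf u^T M \mathbf a)^2$; combined with $\mathbf u \times \mathbf a = \mathbf v_t$ these yield
\[ (\mathbf u^T M \mathbf a)^2 - (\mathbf u^T M \mathbf u)(\mathbf a^T M \mathbf a) = -\,\mathbf v_t^T \operatorname{adj}(M)\,\mathbf v_t = \text{Disc}(t), \]
establishing the equivalence with $\sqrt{\text{Disc}(t)} \in K$. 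Solving the quadratic gives $[s:u] = \bigl[ -(\mathbf u^T M \mathbf a) \pm \sqrt{\text{Disc}(t)} : \mathbf a^T M \mathbf a \bigr]$, hence
\[ \mathbf x(t) = (\mathbf a^T M \mathbf a)\,\mathbf u - (\mathbf u^T M \mathbf a)\,\mathbf a \pm \sqrt{\text{Disc}(t)}\,\mathbf a; \]
expanding each coordinate, eliminating $\mathbf u$ via $\mathbf v_t^T \mathbf u = 0$, and using that the last coordinate of $\mathbf a$ is zero recovers exactly the displayed formulas for $x_1(t), x_2(t), x_0(t)$ — and in particular shows they are independent of the auxiliary choice of $\mathbf u$. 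For the converse, given $\sqrt{\text{Disc}(t)} \in K$ one checks directly that $\mathbf x(t)$ lies on $L_t$ (immediate from $\mathbf v_t^T \mathbf u = \mathbf v_t^T \mathbf a = 0$) and on $\mathcal C$ (a short computation writing $\mathbf x(t) = \mathbf b \pm \sqrt{\text{Disc}(t)}\,\mathbf a$ and using $\mathbf b^T M \mathbf a = 0$ and $(\mathbf a^T M \mathbf a)\,\text{Disc}(t) = -\,\mathbf b^T M \mathbf b$), whence $\ell(\mathbf x(t))=t$.

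I expect the main obstacle to be bookkeeping rather than conceptual depth: verifying $\mathbf u \times \mathbf a = \mathbf v_t$ and then matching the back-substituted coordinates against the stated closed forms after eliminating $p$ and $q$ is a moderately long manipulation, and a fully rigorous account must also dispose of the degenerate configurations (the line at infinity, a line $L_t$ passing through a base point of $\ell$ where $\ell$ is undefined, the possibility that one sign choice produces the zero vector, and a degenerate conic containing $L_t$). The conceptual heart — that the discriminant of a line--conic intersection equals the quadratic form $-\operatorname{adj}(M)$ evaluated on the line's coefficient vector — is short once the cross-product identity is in hand.
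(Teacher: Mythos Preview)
Your argument is correct and takes a genuinely different route from the paper's. The paper proceeds by pure verification: it asserts that the displayed $x_1(t),x_2(t),x_0(t)$ solve the simultaneous system $(a-dt)x_1+(b-et)x_2+(c-ft)x_0=0$ and $Ax_1^2+\cdots+Fx_0^2=0$, and for the forward direction exhibits a $3\times 3$ determinant (with rows $(a,b,c)$, $(d,e,f)$, and $M\mathbf x/(dx_1+ex_2+fx_0)$) whose value is $\pm\sqrt{\text{Disc}(t)}$; since every entry is $K$-rational, so is $\sqrt{\text{Disc}(t)}$. No derivation of the formulas and no explanation of the matrix $-\operatorname{adj}(M)$ are offered. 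Your approach instead \emph{derives} everything: parametrizing the line $L_t$ by $\mathbf a$ and $\mathbf u$, reducing to a binary quadratic in $[s:u]$, and identifying its discriminant with $\text{Disc}(t)$ via the cross-product identities $(M\mathbf u)\times(M\mathbf a)=\operatorname{adj}(M)(\mathbf u\times\mathbf a)$ and Binet--Cauchy, together with $\mathbf u\times\mathbf a=\mathbf v_t$. This buys conceptual transparency---one sees at once why the cofactor matrix governs line--conic intersection---and the construction visibly generalizes (dual conics, pencils). The paper's verification, by contrast, is shorter, needs no auxiliary point $\mathbf u$, and sidesteps your degenerate-case bookkeeping entirely; its determinant trick for necessity is also slightly slicker than back-solving the quadratic. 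Your anticipated obstacle is accurate: the step ``eliminate $\mathbf u$ via $\mathbf v_t^T\mathbf u=0$'' does go through (the $p,q$ terms in $(\mathbf a^TM\mathbf a)\mathbf u-(\mathbf u^TM\mathbf a)\mathbf a$ collapse to a multiple of $(a-dt)p+(b-et)q=-(c-ft)$), but writing it out is the bulk of the work.
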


\begin{proof} It is easy to verify that $x_1 = x_1(t)$, $x_2 = x_2(t)$, and $x_0 = x_0(t)$ as above is to the solution to the simultaneous equations
\[ \begin{aligned} \bigl( a - d \, t \bigr) \, x_1 + \bigl( b - e \, t \bigr) \, x_2 + \bigl( c - f \, t \bigr) \, x_0 & = 0 \\ A \, x_1^2 + 2 \, B \, x_1 \, x_2 + C \, x_2^2 + 2 \, D \, x_1 \, x_0 + 2 \, E \, x_2 \, x_0 + F \, x_0^2 & = 0 \end{aligned} \]

\noindent Similarly, it is easy to verify that the $3 \times 3$ determinant
\[ \left| \begin{matrix} a & b & c \\[5pt] d & e & f \\[5pt] \dfrac {A \, x_1 + B \, x_2 + D \, x_0}{d \, x_1 + e \, x_2 + f \, x_0} & \dfrac {B \, x_1 + C \, x_2 + E \, x_0}{d \, x_1 + e \, x_2 + f \, x_0} & \dfrac {D \, x_1 + E \, x_2 + F \, x_0}{d \, x_1 + e \, x_2 + f \, x_0} \end{matrix} \right| \]
\noindent has the value $\pm \sqrt{\text{Disc}(t)}$, and hence must be an element of $K$. \end{proof}

We explain how Proposition \ref{conic} is related to Proposition \ref{3squares} and Theorem \ref{4squares}.  For the parabola $\mathcal C: \, y = x^2$ and the linear polynomial $\ell(x,y,1) = y$, we have the discriminant $\text{Disc}(t) = t$.  The corresponding points on the parabola are $P = \bigl( \pm \sqrt{t} : t : 1 \bigr)$.  In order to have an arithmetic progression $\{ y_1, \, y_2, \,\dots, \, y_n \} \subseteq K$ of squares, we must choose $y_i = t_i = x_i^2$.  The difficulty in creating an arithmetic progression is forcing $\delta = t_{i+1} - t_i$ to be a constant.

\subsection{Three Squares in Arithmetic Progression Revisited}

The following result gives a generalization to the infinitude of three rational squares in an arithmetic progression.  

\begin{conicsquares} \label{conicsquares} Continue notation as in Proposition \ref{conic}.  Assume that there exists $t_0 \in K$ such that $\sqrt{\text{Disc}(t_0)} \in K$, and define the quantity
\[ k = \dfrac {\text{Disc}'(t_0)^2 - 2 \, \text{Disc}(t_0) \, \text{Disc}''(t_0)}{\text{Disc}'(t_0)^2} 
 \in K. \]
\begin{enumerate}
\item There exist nontrivial sequences $\{ t_1, \, t_2, \, \dots, \, t_n \} \subseteq K$ such that each $\sqrt{\text{Disc}(t_i)} \in K$.  In particular, each $K$-rational point $P_i = \bigl( x_1(t_i) : x_2(t_i) : x_0(t_i) \bigr)$ is in $\mathcal C(K)$, and $\ell(P_{i+1}) - \ell(P_i) = t_{i+1} - t_i$. \\
\item There exists an arithmetic progression $\{ P_1, \, P_2, \, P_3 \} \subseteq \mathbb P^2(K)$ on $\mathcal C$ with respect to $\ell$.  Moreover, the points must necessarily be of the form $P_i = \bigl( x_1(t_i) : x_2(t_i) : x_0(t_i) \bigr)$ in terms of
\[ \begin{aligned} t_1 & = t_0 - \delta \\ t_2 & = t_0 \\ t_3 & = t_0 + \delta \end{aligned} \quad \text{where} \quad \delta = - \dfrac {\text{Disc}(t_0)}{\text{Disc}'(t_0)} \, \dfrac {4 \, X \, Y}{Y^2 + 2 \, X \, Y + k \, X^2} \]
\noindent  for some $K$-rational point $(X:Y:1)$ on the cubic curve
\[ \mathcal E_k: \quad Y^2 + 4 \, X \, Y + 4 \, k \, Y = X^3 + k \, X^2. \]
\end{enumerate}
\end{conicsquares}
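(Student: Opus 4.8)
The plan is to route everything through the \emph{discriminant conic} $\mathcal D \colon z^2 = \text{Disc}(t)$ in the affine $(t,z)$-plane, which by Proposition~\ref{conic} records precisely which values $t \in K$ occur as $\ell(P)$ for $P \in \mathcal C(K)$, and then to recognize the genus-one locus that governs three-term progressions as $\mathcal E_k$. For statement (1): by Proposition~\ref{conic}, $t = \ell(P)$ for some $P \in \mathcal C(K)$ exactly when $\sqrt{\text{Disc}(t)} \in K$, in which case one may take $P = \bigl( x_1(t):x_2(t):x_0(t) \bigr)$. Since $\text{Disc}(t)$ has degree at most $2$ in $t$, the locus $\mathcal D$ is a plane conic carrying the $K$-rational point $\bigl( t_0, \sqrt{\text{Disc}(t_0)} \bigr)$, so I would sweep out further $K$-rational points by intersecting $\mathcal D$ with the pencil of lines $z = \sqrt{\text{Disc}(t_0)} + m\,(t - t_0)$: for all but finitely many $m \in K$ the residual intersection point has a $K$-rational, pairwise distinct $t$-coordinate $t(m)$ (the degenerate cases, in which $\text{Disc}$ is a square times a constant or a linear polynomial, are handled directly). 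Taking $t_i = t(m_i)$ for distinct $m_i$ and $P_i = \bigl( x_1(t_i):x_2(t_i):x_0(t_i) \bigr)$ then gives the asserted sequences, with $\ell(P_{i+1}) - \ell(P_i) = t_{i+1} - t_i$ built in.

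For statement (2), first note that by Proposition~\ref{conic} the members of an arithmetic progression $\{P_1,P_2,P_3\}$ on $\mathcal C$ with respect to $\ell$ must be $P_i = \bigl( x_1(t_i):x_2(t_i):x_0(t_i) \bigr)$ with $t_i = \ell(P_i)$, and $t_1 = t_2 - \delta$, $t_3 = t_2 + \delta$ by the definition of a progression; after renaming $t_0 := t_2$, the task is to pin down the admissible common differences $\delta$. With $d_j = \text{Disc}^{(j)}(t_0)$ and Taylor's formula, $\text{Disc}(t_0 \pm \delta) = d_0 \pm d_1\,\delta + \tfrac{1}{2}\,d_2\,\delta^2$; since $d_0 = \text{Disc}(t_0)$ is already a square in $K$, such a progression exists precisely when $\text{Disc}(t_0-\delta)$ and $\text{Disc}(t_0+\delta)$ are both squares. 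Substituting $\delta = -\dfrac{d_0}{d_1}\,\mu$ and using $k = 1 - 2 d_0 d_2 / d_1^2$ recasts this as the demand that
\[ \text{Disc}(t_0 \pm \delta) = \frac{d_0}{4}\,\bigl( (1-k)\,\mu^2 \mp 4\,\mu + 4 \bigr) \]
both be squares, i.e. as a genus-one curve $\mathcal D'$ in variables $(\mu, z_-, z_+)$ carrying the evident $K$-rational point $\mu = 0$, $z_\pm = 2$.

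Next I would exhibit the parametrization of $\mathcal D'$ by $\mathcal E_k$. Given a $K$-rational point $(X:Y:1)$ on $\mathcal E_k$, put $\mu = \dfrac{4XY}{Y^2 + 2XY + kX^2}$. Using the identities $2(Y^2+2XY+kX^2) - 4XY = 2(Y^2+kX^2)$ and $(Y^2+kX^2)^2 - 4kX^2Y^2 = (Y^2-kX^2)^2$ on the one hand, and --- after a single appeal to $Y^2 + 4XY + 4kY = X^3 + kX^2$ --- the identities $2(Y^2+2XY+kX^2) + 4XY = 2(X^3 + 2kX^2 - 4kY)$ and (again modulo that relation) $(X^3+2kX^2-4kY)^2 - 4kX^2Y^2 = (X^3+4kY)^2$ on the other, one computes
\[ (1-k)\,\mu^2 - 4\,\mu + 4 = \left( \dfrac{2\,(Y^2-kX^2)}{Y^2+2XY+kX^2} \right)^{\!2}, \qquad (1-k)\,\mu^2 + 4\,\mu + 4 = \left( \dfrac{2\,(X^3+4kY)}{Y^2+2XY+kX^2} \right)^{\!2}. \]
Hence both quantities in the previous display are squares in $K$, so $\sqrt{\text{Disc}(t_0 \pm \delta)} \in K$, the points $P_i$ form the desired progression, and $\delta = -\dfrac{d_0}{d_1}\,\mu = -\dfrac{\text{Disc}(t_0)}{\text{Disc}'(t_0)} \cdot \dfrac{4XY}{Y^2+2XY+kX^2}$, exactly as claimed (the point at infinity returns the constant progression $\delta = 0$).

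It remains to prove the converse --- that every common difference of a progression through $t_0$ comes from a $K$-rational point of $\mathcal E_k$. Given $\delta$ with $\sqrt{\text{Disc}(t_0\pm\delta)} \in K$, the triple $\mu = -d_1\delta/d_0$, $z_\pm = 2\sqrt{\text{Disc}(t_0 \pm \delta)}\,/\sqrt{d_0}$ is a $K$-rational point of $\mathcal D'$, and inverting the displayed parametrization (solving the two quadratic relations for the ratio $X/Y$ and rescaling) produces the required $(X:Y:1)$ on $\mathcal E_k$. Equivalently, one verifies that $\mathcal D'$ with its marked point $\mu = 0$ is $K$-isomorphic to $\mathcal E_k$: completing the square rewrites $\mathcal E_k$ as $Y^2 = X^3 + (k+4)\,X^2 + 8k\,X + 4k^2$, and clearing denominators in $\sqrt{\text{Disc}(t_0\mp\delta)}$ followed by the chord-and-tangent reduction carries $\mathcal D'$ onto the same Weierstrass model. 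I expect this identification to be the crux: the forward direction above is a short manipulation that invokes the curve relation only once, but confirming that $\mathcal D'$ --- equivalently, a plane cubic model of it --- is $K$-isomorphic to $\mathcal E_k$ in \emph{precisely} the stated coordinates, with $\delta$ carried to \emph{precisely} the displayed rational function of $(X,Y)$ and not merely to some $K$-isomorphic variant, will require pushing the change of variables through with care.
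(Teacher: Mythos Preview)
Your proposal is correct, and the overall strategy---reduce to the conic $z^2=\text{Disc}(t)$ for part~(1), and to a genus-one curve for part~(2)---matches the paper's. The execution of part~(2), however, is organized differently. The paper introduces \emph{two} slope variables $u=\bigl(\sqrt{\text{Disc}(t+\delta)}-\sqrt{\text{Disc}(t)}\bigr)\big/\bigl(\delta\sqrt{\text{Disc}(t)}\bigr)$ and $v=\bigl(\sqrt{\text{Disc}(t-\delta)}-\sqrt{\text{Disc}(t)}\bigr)\big/\bigl(\delta\sqrt{\text{Disc}(t)}\bigr)$, derives a quartic relation between them from $\text{Disc}(t-\delta)/\text{Disc}(t+\delta)$, and then writes down the explicit birational map $X=2k\,c_1/(v-u-c_1)$, $Y=2k\,(2u-c_1)/(v-u-c_1)$ to $\mathcal E_k$; because the map is given in both directions, existence and the ``moreover'' clause fall out simultaneously. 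You instead rescale $\delta$ to $\mu$, substitute $\mu=4XY/(Y^2+2XY+kX^2)$, and verify by hand---using the neat factorization $Y^2=(X^2-4Y)(X+k)$ of the curve relation---that both $(1-k)\mu^2\mp 4\mu+4$ become perfect squares. Your forward direction is shorter and more transparent than the paper's (it invokes the equation of $\mathcal E_k$ exactly once, as you note), but the price is that your converse remains a sketch: the paper's explicit inverse formulae close that gap automatically. If you want to finish in your own framework without the chord-and-tangent detour, observe that your two displayed identities already furnish the inverse---from $(\mu,z_-,z_+)$ one recovers $Y/X=(2-z_-)/\mu - 1$ and then $X$ from $z_+$, which is exactly the paper's $(u,v)\mapsto(X,Y)$ in disguise.
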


\begin{proof} Assume that $\sqrt{\text{Disc}(t)} \in K$ for some $t \in K$, and define the quantities
\[ 
c_n = \dfrac {1}{\text{Disc}(t)}\, \dfrac {d^n}{d t^n} \left[ \dfrac {\text{Disc}(t)}{n!} \right] \ \implies \ \dfrac {\text{Disc}(t + \delta)}{\text{Disc}(t)} = c_0 + c_1 \, \delta + c_2 \, \delta^2. \]
\noindent Then $\sqrt{\text{Disc}(t + \delta)} \in K$ for some nonzero $\delta \in K$ if and only if the following quantity is $K$-rational:
\[ u = \dfrac {\sqrt{\text{Disc}(t + \delta)} - \sqrt{\text{Disc}(t)}}{\delta \sqrt{\text{Disc}(t)}} \qquad \iff \qquad  \delta = \dfrac {c_1 - 2 \, u}{u^2 - c_2}  \]
\noindent because $(\delta \, u + 1)^2 = \text{Disc}(t + \delta) / \text{Disc}(t) = c_0 + c_1 \, \delta + c_2 \, \delta^2$. Inductively, if $\sqrt{\text{Disc}(t_i)} \in K$ for some $t_i = t \in K$, then we can find $\delta \in K$ as a function of $t_i$ and $u$ such that $\sqrt{\text{Disc}(t_{i+1})} \in K$ for $t_{i+1} = t + \delta \in K$. 

If $\sqrt{\text{Disc}(t \pm \delta)}$ are simultaneously $K$-rational, then define the quantities
\[ u = \dfrac {\sqrt{\text{Disc}(t + \delta)} - \sqrt{\text{Disc}(t)}}{\delta \sqrt{\text{Disc}(t)}} \ \text{and} \ v = \dfrac {\sqrt{\text{Disc}(t - \delta)} - \sqrt{\text{Disc}(t)}}{\delta \sqrt{\text{Disc}(t)}} \]
\noindent so that we have the identities
\[ \begin{aligned} \left( \dfrac {\delta \, v + 1}{\delta \, u + 1} \right)^2 & = \dfrac {\text{Disc}(t - \delta)}{\text{Disc}(t + \delta)} = \dfrac {c_0 - c_1 \, \delta + c_2 \, \delta^2}{c_0 + c_1 \, \delta + c_2 \, \delta^2} \\[5pt] & = \dfrac {u^4 + 2 \, c_1 \, u^3 + (2 \, c_2 - c_1^2) \, u^2 - 6 \, c_1 \, c_2 \, u + c_2 \, (c_2 + 2 \, c_1^2)}{(u^2 - c_1 \, u + c_2)^2}. \end{aligned} \]
\noindent This is a quartic curve, so we transform into a cubic curve using the ideas in Cassels \cite[Chapter 8]{MR1144763}.  Using the birational transformations
\[ \left. \begin{aligned}
X & = 2 \, k \, \dfrac {c_1}{v - u - c_1} \\[5pt] 
Y & = 2 \, k \, \dfrac {2 \, u - c_1}{v - u - c_1} 
\end{aligned} \right \} \qquad \iff \qquad \left \{ \begin{aligned}
u & = c_1 \, \dfrac {Y + X}{2 \, X} \\[5pt]
v & = c_1 \, \dfrac {Y + 3 \, X + 4 \, k}{2 \, X} \end{aligned} \right. \]
\noindent it is easy to see that the quartic identity above is equivalent to the cubic relation $Y^2 + 4 \, X \, Y + 4 \, k \, Y = X^3 + k \, X^2$ in terms of the $K$-rational quantity
\[ k = \dfrac {c_1^2 - 4 \, c_2}{c_1^2} = \dfrac {\text{Disc}'(t)^2 - 2 \, \text{Disc}(t) \, \text{Disc}''(t)}{\text{Disc}'(t)^2}. \]
\noindent Conversely, if $(X:Y:1)$ is a $K$-rational point on the cubic curve, then both $\sqrt{\text{Disc}(t \pm \delta)} \in K$ simultaneously when we choose the common difference
\[ \delta = \dfrac {c_1 - 2 \, u}{u^2 - c_2} = \dfrac {-c_1 - 2 \, v}{v^2 - c_2} = - \dfrac {1}{c_1} \, \dfrac {4 \, X \, Y}{Y^2 + 2 \, X \, Y + k \, X^2}. \]
\noindent This completes the proof. \end{proof}

We explain how Theorem \ref{conicsquares} is related to Proposition \ref{3squares}.  For the parabola $\mathcal C: \, y = x^2$ and the linear polynomial $\ell(x,y,1) = y$, we have the discriminant $\text{Disc}(t) = t$.  We choose a nonzero square $t_0 = x_2^2$ so that $\sqrt{\text{Disc}(t_0)} = x_2 \in K$.  As $k = 1$ in this case, the cubic curve $Y^2 + 4 \, X \, Y + 4 \, k \, Y = X^3 + k \, X^2$ is singular, so that we have a rational parametrization of $K$-rational points $(X:Y:1)$.
\[ t = -\dfrac {Y + 3 \, X + 4}{Y + X} \qquad \iff \qquad \left \{ \begin{aligned} X & = - \dfrac {1}{4} \, \dfrac {t}{(t+1)^2} \\[5pt] Y & = \dfrac {1}{16} \, \dfrac {t-1}{(t+1)^3} \end{aligned} \right. \]

\noindent This yields the common difference
\[ \delta = - \dfrac {1}{c_1} \, \dfrac {4 \, X \, Y}{Y^2 + 2 \, X \, Y + k \, X^2} = \dfrac {4 \, (t^3 - t)}{(t^2 + 1)^2} \cdot x_2^2 \]

\noindent for the $K$-rational squares
\[ \begin{matrix} 
y_1 & = & t_0 - \delta & = & \left( \dfrac {t^2 - 2 \, t - 1}{t^2 + 1} \cdot x_2 \right)^2, \\[10pt]
y_2 & = & t_0 & = & x_2^2, \\
y_3 & = & t_0 + \delta & = & \left( \dfrac {t^2 + 2 \, t - 1}{t^2 + 1} \cdot x_2 \right)^2.  \\
\end{matrix} \]

\subsection{Relation with Modular Curves and Moduli Spaces} 

There is quite a bit of symmetry in the general construction above.

\begin{symmetry} \label{symmetry} Continue notation as in Proposition \ref{conic} and Theorem \ref{conicsquares}, and assume that
\[ \mathcal E_k: \quad Y^2 + 4 \, X \, Y + 4 \, k \, Y = X^3 + k \, X^2 \]
\noindent is an elliptic curve. Translation $(X:Y:1) \mapsto (0:0:1) \oplus (X:Y:1)$ yields a permutation $\sigma$ of order 4, while inversion $(X:Y:1) \mapsto [-1] \, (X:Y:1)$ yields a permutation $\tau$ of order 2.  The group
\[ D_4 \simeq \left \langle \sigma, \, \tau \, \bigl| \, \sigma^4 = \tau^2 = 1, \ \tau \circ \sigma \circ \tau = \sigma^{-1} \right \rangle \]
\noindent acts on $\mathcal E_k(K)$, and sends $\delta \mapsto \pm \delta$. \end{symmetry}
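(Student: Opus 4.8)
The plan is to verify the group-theoretic assertions about $\sigma$ and $\tau$ directly from the group law on $\mathcal E_k$, and then to track the effect on the common difference $\delta$ through the parametrization used in the proof of Theorem~\ref{conicsquares}.

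\textbf{Order of the translation and the dihedral relation.} First I would locate a rational $4$-torsion point. The point $P_0 = (0:0:1)$ lies on $\mathcal E_k$, and since $k \ne 0$ the tangent line there is $Y = 0$, which meets the cubic where $X^2 \, (X + k) = 0$; hence the residual intersection is $(-k:0:1)$ and $[2] \, P_0 = (-k:0:1)$. Because $(-k:0:1)$ equals its own inverse, it is a nonzero point of order $2$, so $P_0$ has order exactly $4$. Consequently $\sigma$, translation by $P_0$, is a permutation of $\mathcal E_k(K)$ of order $4$, and $\sigma^2$ is translation by the \emph{nonzero} point $(-k:0:1)$, so $\sigma^2 \ne \mathrm{id}$. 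Inversion $\tau$ is an involution, and it is nontrivial since $-P_0 = (0:-4 \, k:1) \ne P_0$. The dihedral relation falls out of the group law: for every $P$,
\[ (\tau \circ \sigma \circ \tau)(P) \ = \ -\bigl( P_0 \oplus (-P) \bigr) \ = \ (-P_0) \oplus P \ = \ \sigma^{-1}(P), \]
so $\sigma$ and $\tau$ satisfy the defining relations of $D_4$. This yields a surjection $D_4 \twoheadrightarrow \langle \sigma, \tau \rangle$ whose kernel is normal in $D_4$; every nontrivial normal subgroup of $D_4$ contains the center $\langle \sigma^2 \rangle$, and $\sigma^2 \ne \mathrm{id}$, so the kernel is trivial and $\langle \sigma, \tau \rangle \simeq D_4$ acts faithfully on $\mathcal E_k(K)$.

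\textbf{Effect on the common difference.} From the proof of Theorem~\ref{conicsquares}, a point $(X:Y:1)$ on $\mathcal E_k$ determines parameters $u = c_1 \, (Y + X)/(2 \, X)$ and $v = c_1 \, (Y + 3 \, X + 4 \, k)/(2 \, X)$, where $c_1 = \text{Disc}'(t_0)/\text{Disc}(t_0)$, and the common difference satisfies $\delta = (c_1 - 2 \, u)/(u^2 - c_2) = (-c_1 - 2 \, v)/(v^2 - c_2)$. Applying $\tau$ replaces $Y$ by $-Y - 4 \, X - 4 \, k$, which sends $(u, v) \mapsto (-v, -u)$; feeding $-v$ into the first expression for $\delta$ gives $(c_1 + 2 \, v)/(v^2 - c_2) = -\delta$. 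For $\sigma$ one runs the chord construction: the line through $(0:0:1)$ and $(X:Y:1)$ meets $\mathcal E_k$ again at $(-4 \, k \, Y / X^2 : -4 \, k \, Y^2 / X^3 : 1)$, so after reflecting and reducing with the curve equation,
\[ \sigma(X:Y:1) \ = \ \left( -\dfrac{4 \, k \, Y}{X^2} \ : \ \dfrac{4 \, k^2 \, (X^2 - 4 \, Y)}{X^3} \ : \ 1 \right); \]
substituting this into the formula for the parameter $v$ and simplifying once more with $Y^2 + 4 \, X \, Y + 4 \, k \, Y = X^3 + k \, X^2$ shows the new value of $v$ equals $-u$, so the second expression for $\delta$ gives $(-c_1 + 2 \, u)/(u^2 - c_2) = -\delta$. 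Hence each of the two generators of $D_4$ negates $\delta$, and an arbitrary element $g = \sigma^i \, \tau^j$ sends $\delta \mapsto (-1)^{i+j} \, \delta$; equivalently the sign is $\chi(g)$ for the unique character $\chi : D_4 \to \{\pm 1\}$ with $\chi(\sigma) = \chi(\tau) = -1$.

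\textbf{Where the difficulty lies.} The only genuinely laborious step is the $\sigma$ computation: one must carry out the explicit addition of the $4$-torsion point $(0:0:1)$ and then repeatedly clear denominators and reduce modulo $Y^2 + 4 \, X \, Y + 4 \, k \, Y = X^3 + k \, X^2$, which is where sign and bookkeeping errors are easiest to commit. (One could instead substitute $\sigma(X:Y:1)$ and $\tau(X:Y:1)$ straight into the closed form $\delta = -c_1^{-1} \cdot 4 \, X \, Y / (Y^2 + 2 \, X \, Y + k \, X^2)$, but the algebra is of comparable length.) A point not to gloss over is the faithfulness of the action, which is precisely where one uses that $\mathcal E_k$ is an elliptic curve — so that $k \ne 0, 1, \infty$ and $(0:0:1)$ has order $4$ rather than $2$ — to guarantee $\sigma^2 \ne \mathrm{id}$.
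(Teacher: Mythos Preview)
Your proposal is correct and follows essentially the same route as the paper: verify that $(0:0:1)$ has order $4$, write down the explicit coordinate formulas for $\sigma$ and $\tau$, and then track their effect on the auxiliary parameters $(u,v)$ to conclude $\delta \mapsto -\delta$ under each generator. The paper records the same transformation table (including $\sigma: v \mapsto -u$ and $\tau: (u,v) \mapsto (-v,-u)$) and simply asserts the dihedral relations, whereas you supply the abstract group-law identity $(\tau\sigma\tau)(P) = (-P_0)\oplus P$ and an explicit faithfulness argument; these are welcome elaborations but not a different strategy.
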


\begin{proof} It is easy to verify that $(0:0:1) \in \mathcal E_k(K)$ is a torsion point of order 4.  It is also straight-forward to verify the following maps:
\[ \sigma: \left \{ \begin{aligned} X & \mapsto - 4 \, k \, Y/X^2 \\[4pt] Y & \mapsto 4 \, k^2 \, (X^2 - 4 \, Y)/X^3 \\[4pt] u & \mapsto (c_1 \, v + 2 \, c_2)/(2 \, v + c_1) \\[5pt] v & \mapsto -u \\[5pt] \delta & \mapsto -\delta \end{aligned} \right \}
\quad
\tau: \left \{ \begin{aligned} X & \mapsto X \\[5pt] Y & \mapsto -Y - 4 \, X - 4 \, k \\[5pt] u & \mapsto -v \\[5pt] v & \mapsto -u \\[5pt] \delta & \mapsto - \delta \end{aligned} \right \} \]

\noindent Hence $\sigma^4 = \tau^2 = 1$ while $\tau \circ \sigma \circ \tau = \sigma^{-1}$. \end{proof}

Proposition \ref{symmetry} hints at the many properties of $\mathcal E_k$ from Theorem \ref{conicsquares}.  This cubic curve is quite general.

\begin{4torsion} \label{4torsion} Let $K$ be a field of characteristic different from 2.  Then any elliptic curve $\mathcal E$ over $K$ with a $K$-rational point $P$ of order 4 must be in the form
\[ \mathcal E_k: \quad Y^2 + 4 \, X \, Y + 4 \, k \, Y = X^3 + k \, X^2 \]
\noindent where $k \neq 0, \, 1, \, \infty$ and $P_k = (0:0:1)$. \end{4torsion}

\begin{proof} This can be found in Kubert \cite[Page 217, Table 3]{MR0434947} and Husem{\"o}ller \cite[Page 94, Example (4.6)(a)]{Husemoller:2004p9376}, but we provide a relatively self-contained proof.  Without loss of generality, assume that $P = (x_0 : y_0 : 1)$ is a point of order 4 on the Weierstrass equation
\[ \mathcal E: \quad y^2 + a_1 \, x \, y + a_3 \, y = x^3 + a_2 \, x_2 + a_4 \, x + a_6. \]
\noindent If we define the coefficients
\[ \begin{aligned}
b_2 & = a_1^2 + 4 \, a_2 \\[5pt]
b_4 & = a_1 \, a_3 + 2 \, a_4 \\[5pt]
b_6 & = a_3^2 + 4 \, a_6 \\[5pt]
b_8 & = a_1^2 \, a_6  + 4 \, a_2 \, a_6 - a_1 \, a_3 \, a_4 + a_2 \, a_3^2 - a_4^2
\end{aligned} \]
\noindent and then substitute
\[ \begin{aligned}
k_1 & = \dfrac {2 \, y_0 + a_1 \, x_0 + a_3}{6 \, x_0^2 + b_2 \, x_0 + b_4} \\[5pt]
k_2 & = 16 \, \dfrac {3 \, x_0^4 + b_2 \, x_0^3 + 3 \, b_4 \, x_0^2 + 3 \, b_6 \, x_0 + b_8}{( 6 \, x_0^2 + b_2 \, x_0 + b_4)^2} \\[5pt]
k_3 & = 16 \, \dfrac {( 2 \, y_0 + a_1 \, x_0 + a_3)^4}{( 6 \, x_0^2 + b_2 \, x_0 + b_4)^3} \\[5pt]
X & = 16 \, k_1^2 \, (x-x_0) \\[5pt] 
Y & = 64 \, k_1^3 \, (y-y_0) + 32 \, k_1^2 \, (a_1 \, k_1 - 1) \, (x - x_0)
\end{aligned} \]
\noindent then $\mathcal E$ has the equation $Y^2 + 4 \, X \, Y + 4 \, k_3 \, Y = X^3 + k_2 \, X^2$ while $P$ maps to the point $(0:0:1)$. It is easy to verify that
\[ \begin{aligned} \, [4] \, \bigl( 0:0:1 \bigr) = \biggl( & 4 \, k_2 \, (k_3 - k_2) \, (16 \, k_3^2 - 16 \, k_3 \, k_2 + k_2^3) \\ & : k_2^3 \, (32 \, k_3^2- 48 \, k_3 \, k_2 + 16 \, k_2^2 + k_2^3 \bigr) : 64 \, (k_3 - k_2)^2 \biggr), \end{aligned} \]

\noindent and so $(0:0:1)$ has order 4 if and only if $k_2 = k_3$.  Upon denoting $k = k_2 = k_3$, we see that $\mathcal E \simeq \mathcal E_k$ is an elliptic curve if and only if $k \neq 0, \, 1, \, \infty$. \end{proof}

We offer a general discussion regarding Proposition \ref{4torsion} in order to explain the general framework.  We introduce modular curves and moduli spaces.  To this end, recall that the extended upper-half plane $\mathbb H = \bigl \{ x + i \, y \ \bigl| \ y > 0 \bigr \} \cup \mathbb P^1(\mathbb Q)$ is isomorphic to the unit disk via the map $\mathbb H \to \mathbb P^1(\mathbb C)$ which sends $\tau$ to $q = e^{2 \pi i \tau}$, and we have an left action
\[ \circ: SL_2(\mathbb Z) \times \mathbb H \to \mathbb H \quad \text{defined by} \quad \left[ \begin{matrix} a & b \\ c & d \end{matrix} \right] \circ \tau = \dfrac {a \, \tau + b}{c \, \tau + d}. \]
\noindent For any positive integer $N$, we consider the congruence subgroups
\[ \begin{aligned} 
\Gamma_0(N) & = \left \{ \left. \left[ \begin{matrix} a & b \\ c & d \end{matrix} \right] \in SL_2(\mathbb Z) \ \right| \ c \equiv 0 \pmod{N} \right \} \\[5pt]
\Gamma_1(N) & = \left \{ \left. \left[ \begin{matrix} a & b \\ c & d \end{matrix} \right] \in SL_2(\mathbb Z) \ \right| \ a \equiv d \equiv 1, \ c \equiv 0 \pmod{N} \right \}.
\end{aligned} \]

\noindent We have a commutative diagram
\[ \xymatrix @!R=0.2in @!C=0.7in{
\dfrac {\mathbb H}{\Gamma_1(4)} \ar@{->}[r]^{k} \ar@{->}[d] & \mathbb P^1(\mathbb C) \ar@{->}[d] & & k(\tau) = - \dfrac {1}{\displaystyle 16 \, q \prod_{n=1}^{\infty} \bigl( 1 + q^n \bigr)^8 \, \bigl( 1+q^{2n} \bigr)^8} \\
\dfrac {\mathbb H}{\Gamma_1(2)} \ar@{->}[r]^{r} \ar@{->}[d] & \mathbb P^1(\mathbb C) \ar@{->}[d] & & r(\tau) = \dfrac {1}{\displaystyle q \prod_{n=1}^{\infty} \bigl ( 1 + q^n \bigr)^{24}} \\
\dfrac {\mathbb H}{\Gamma_1(1)} \ar@{->}[r]^{j} & \mathbb P^1(\mathbb C) & & j(\tau) = \dfrac {\displaystyle \left[ 1 + 240 \sum_{n=1}^{\infty} \sigma_3(n) \, q^n \right]^3}{\displaystyle q \prod_{n=1}^{\infty} \bigl( 1 - q^n \bigr)^{24}} \\
} \]

\noindent where $\sigma_\alpha(n) = \sum_{d|n} d^\alpha$ is the divisor function, the horizontal arrows are bijections, and we have the identities $j(\tau) = \bigl( r(\tau) + 256 \bigr)^3 / r(\tau)^2$ and $r(\tau) = 16 \, k(\tau)^2 / \bigl( 1 - k(\tau) \bigr)$ coming from the vertical arrows.  These maps are defined over $\mathbb C$, but we wish to give an interpretation over any field $K$ of characteristic different from 2.  The moduli spaces $X_0(N)$ and $X_1(N)$ consist of pairs $(\mathcal E, C)$ and $(\mathcal E, P)$ of elliptic curves $\mathcal E$ with either a cyclic subgroup $C$ of order $N$ or a specified point $P$ of order $N$, respectively.  We have $X_0(N)(\mathbb C) \simeq \mathbb H / \Gamma_0(N)$ and $X_1(N)(\mathbb C) \simeq \mathbb H / \Gamma_1(N)$, but we are interested in $K$-rational points.  By ``forgetting'' the level structures, we have a commutative diagram
\[ \xymatrix @!R=0.0in @!C=0.7in{
\mathbb P^1(K) \ar@{->}[r] \ar@{->}[d] & X_1(4)(K) \ar@{->}[d] & & {\begin{aligned} \mathcal E_k &: \ Y^2 +4 \, X \, Y + 4 \, k \, Y = X^3 + k \, X^2 \\ P_k & = (0:0:1) \end{aligned}} \\
\mathbb P^1(K) \ar@{->}[r] \ar@{->}[d] & X_1(2)(K) \ar@{->}[d] & & {\begin{aligned} \mathcal E_r &: \ Y^2 = X^3 + 2 \, X^2 + r/(r+64) \, X \\ Q_r & = (0:0:1) \end{aligned}} \\
\mathbb P^1(K) \ar@{->}[r] & X_1(1)(K) & & {\begin{aligned} \mathcal E_j &: \ Y^2 + X \, Y = X^3 \\ & \quad + 36/(1728 - j) \, X + 1/(1728 - j) \\ \mathcal O_j & = (0:1:0) \end{aligned}} \\
} \]

\noindent where the horizontal arrows sending $k \mapsto \bigl( \mathcal E_k, \, P_k \bigr)$ and $j \mapsto \bigl( \mathcal E_j, \, \mathcal O_j \bigr)$ are bijections as in Proposition \ref{4torsion}, and we have the two relations $j = (r+256)^3/r^2$ and $r = 16 \, k^2 / (1-k)$ coming from the vertical arrows.  (Not every $X_0(N)$ or $X_1(N)$ is birationally equivalent to $\mathbb P^1$: as mentioned in the previous section, $X_0(24)$ is the elliptic curve $Y^2 = X^3 + 5 \, X^2 + 4 \, X$.)

\subsection{Congruent Numbers Revisited}

Theorem \ref{conicsquares} gives an explicit way to write down arithmetic progressions $\{ P_1, \, P_2, \, P_3 \} \subseteq \mathbb P^2(K)$ on a conic section
\[ \mathcal C = \left \{ (x_1: x_2 : x_0) \in \mathbb P^2 \ \left| \ \begin{aligned} A \, x_1^2 & + 2 \, B \, x_1 \, x_2 + C \, x_2^2 \\ & + 2 \, D \, x_1 \, x_0 + 2 \, E \, x_2 \, x_0 + F \, x_0^2 = 0 \end{aligned} \right. \right \} \]

\noindent with respect to a linear rational map 
\[ \begin{CD} \mathcal C(K) @>{\ell}>> \mathbb P^1(K), \qquad (x_1 : x_2 : x_0) \mapsto \dfrac {a \, x_1 + b \, x_2 + c \, x_0}{d \, x_1 + e \, x_2 + f \, x_0} \end{CD} \]

\noindent defined over a field $K$ of characteristic different from 2.  Consider those $t_0 \in K$ such that $\sqrt{\text{Disc}(t_0)} \in K$ and
\[ k = \dfrac {\text{Disc}'(t_0)^2 - 2 \, \text{Disc}(t_0) \, \text{Disc}''(t_0)}{\text{Disc}'(t_0)^2} \neq 0, \, 1, \, \infty \]

\noindent in terms of the discriminant
\[ \begin{aligned} & \text{Disc}(t) \\ & \quad = \left[ \begin{matrix} a-d \, t \\ b-e \, t \\ c-f \, t \end{matrix} \right]^T \left[ \begin{matrix} E^2 - C \, F & B \, F - D \, E & C \, D - B \, E \\ B \, F - D \, E & D^2 - A \, F & A \, E - B \, D \\ C \, D - B \, E & A \, E - B \, D & B^2 - A \, C \end{matrix} \right] \left[ \begin{matrix} a-d \, t \\ b-e \, t \\ c-f \, t \end{matrix} \right]. \end{aligned} \]

\noindent For each $K$-rational point $(X:Y:1)$ on the elliptic curve
\[ \mathcal E_k: \quad Y^2 + 4 \, X \, Y + 4 \, k \, Y = X^3 + k \, X^2 \]

\noindent the desired set is
\[ \{ P_1, \, P_2, \, P_3 \} = \left \{ \begin{matrix} \bigl( x_1(t_0 - \delta):x_2(t_0-\delta):x_0(t_0-\delta) \bigr), \\[10pt] \bigl( x_1(t_0):x_2(t_0):x_0(t_0) \bigr), \\[10pt] \bigl( x_1(t_0 + \delta):x_2(t_0+\delta):x_0(t_0+\delta) \bigr) \end{matrix} \right \} \]

\noindent in terms of the coordinates

\[ \begin{aligned}
x_1(t) & = B \, (b-e \, t) \, (c-f \, t) - C\, (a-d \, t) \, (c-f \, t) \\ & - D \, (b-e \, t)^2 + E \, (a-d \, t) \, (b-e \, t) \pm (b - e \, t) \, \sqrt{\text{Disc}(t)}, \\[5pt]
x_2(t) & = -A \, (b - e \, t) \, (c - f \, t ) + B \, (a-d \, t ) \, (c-f \, t ) \\ & + D \, (a-d \, t ) \, (b-e \, t) - E \, (a-d \, t)^2 \mp (a-d \, t)  \, \sqrt{\text{Disc}(t)}, \\[5pt]
x_0(t) & = A \, (b-e \, t )^2 - 2 \, B \, (a-d \, t) \, (b-e \, t) + C \, (a-d \, t )^2;
\end{aligned} \]

\noindent and the common difference
\[ \delta = - \dfrac {\text{Disc}(t_0)}{\text{Disc}'(t_0)} \, \dfrac {4 \, X \, Y}{Y^2 + 2 \, X \, Y + k \, X^2}. \]

\noindent We have seen that the dihedral group $D_4$ of order 8 coming from translation by 4-torsion on $\mathcal E_k$ acts on the points $(X:Y:1) \in \mathcal E_k(K)$ and sends $\delta \mapsto \pm \delta$.  We generalize Corollary \ref{congruum} by asking which $\delta \in K$ can occur as the common differences of arithmetic progressions on conic sections with respect to linear rational maps.

\begin{coniccongruum} \label{coniccongruum} For each nonzero $\delta \in K$, denote the quantities
\[ \begin{matrix}
A & = & \text{Disc}(t) - \text{Disc}(t-\delta) & = & \text{Disc}'(t) \, \delta - \text{Disc}''(t) \, \delta^2 / 2, \\[5pt]
B & = & \text{Disc}(t+\delta) - \text{Disc}(t) & = & \text{Disc}'(t) \, \delta + \text{Disc}''(t) \, \delta^2 / 2, \\[5pt]
C & = & \text{Disc}(t+\delta) - \text{Disc}(t-\delta) & = & 2 \, \text{Disc}'(t) \, \delta.
\end{matrix} \]
\noindent Then the following are equivalent.
\begin{enumerate}
\item There exist $x_1, \, x_2, \, x_3 \in K$ such that $x_2^2 - A = x_1^2$ and $x_2^2 + B = x_3^2$.
\item There exist $X, \, Y \in K$ with $Y \neq 0$ such that $Y^2 = X \, (X-A) \, (X+B)$.
\item Upon defining an ``angle'' $\theta$ by $\cos \theta = (B-A)/C$, there exist $a, \, b, \, c \in K$ such that $a^2 - 2 \, a \, b \, \cos \theta + b^2 = c^2$ and $(1/2) \, a \, b \, \sin \theta = \sqrt{A \, B}$.
\end{enumerate} \end{coniccongruum}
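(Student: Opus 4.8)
The plan is to imitate the proof of Corollary~\ref{congruum} in spirit: establish all three equivalences by writing down explicit birational transformations, with no appeal to general machinery. At the outset I would record two elementary facts. First, $C = A + B$: this is immediate from the definitions, or from the observation that $\text{Disc}(t)$ has degree at most $2$ in $t$, so that its Taylor expansion about $t$ is exact and reproduces the stated closed forms for $A$, $B$, $C$. Second, $\sin^2 \theta = 1 - \cos^2 \theta = 1 - (B - A)^2/C^2 = 4 \, A \, B / C^2$, so that $\sqrt{A \, B} = \tfrac12 \, C \, \sin \theta$ and the condition $(1/2) \, a \, b \, \sin \theta = \sqrt{A \, B}$ of (3) amounts to $a \, b = C$. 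As in Corollary~\ref{congruum}, I would take for granted that $A$, $B$, $C$ are nonzero; this is exactly the condition for the cubic $Y^2 = X \, (X - A) \, (X + B)$ to be nonsingular.

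For the equivalence of (1) and (3) I would use the linear substitution
\[
\left. \begin{aligned} a & = x_3 - x_1 \\ b & = x_3 + x_1 \\ c & = 2 \, x_2 \end{aligned} \right \} \qquad \iff \qquad \left \{ \begin{aligned} x_1 & = (b - a)/2 \\ x_2 & = c/2 \\ x_3 & = (b + a)/2 \end{aligned} \right.
\]
Using $x_2^2 - x_1^2 = A$ and $x_3^2 - x_2^2 = B$ one finds $a \, b = x_3^2 - x_1^2 = A + B = C$ and $x_1^2 + x_3^2 = 2 \, x_2^2 + (B - A)$, so that $a^2 - 2 \, a \, b \, \cos \theta + b^2 = 2 \, (x_1^2 + x_3^2) - 2 \, (B - A) = 4 \, x_2^2 = c^2$; and the same two identities, read backwards, turn any $a$, $b$, $c$ as in (3) into a solution of (1). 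So this equivalence is pure bookkeeping.

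For the equivalence of (1) and (2) I would exhibit the birational map
\[
\left. \begin{aligned} X & = \dfrac{A \, B \, (x_3 - x_1)}{B \, (x_2 - x_1) - A \, (x_3 - x_2)} \\[6pt] Y & = \dfrac{A \, B \, C}{B \, (x_2 - x_1) - A \, (x_3 - x_2)} \end{aligned} \right \} \quad \iff \quad \left \{ \begin{aligned} x_1 & = \dfrac{X^2 - 2 \, A \, X - A \, B}{2 \, Y} \\[6pt] x_2 & = \dfrac{X^2 + A \, B}{2 \, Y} \\[6pt] x_3 & = \dfrac{X^2 + 2 \, B \, X - A \, B}{2 \, Y} \end{aligned} \right.
\]
which specializes to the transformation in Corollary~\ref{congruum} when $A = B = \delta$. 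Going from right to left, the factorizations $(X^2 + A \, B)^2 - (X^2 - 2 \, A \, X - A \, B)^2 = 4 \, A \, X \, (X - A) \, (X + B)$ and $(X^2 + 2 \, B \, X - A \, B)^2 - (X^2 + A \, B)^2 = 4 \, B \, X \, (X - A) \, (X + B)$, together with $Y^2 = X \, (X - A) \, (X + B)$, yield $x_2^2 - x_1^2 = A$ and $x_3^2 - x_2^2 = B$. Going from left to right, the relations of (1) force the displayed $X$, $Y$ to satisfy $Y^2 = X \, (X - A) \, (X + B)$ with $Y \neq 0$, since the denominator $B \, (x_2 - x_1) - A \, (x_3 - x_2)$ vanishes only when $x_2 = x_3$ or $x_1 = x_3$, each of which forces one of $A$, $B$, $C$ to be zero; then also $X \neq 0$, and a direct substitution confirms that the two maps are mutually inverse.

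The main obstacle is purely computational: verifying the two quartic factorizations and the ``collapse'' onto the cubic in the (1) $\iff$ (2) step, keeping careful track of signs and of the finitely many degenerate specializations where a displayed denominator vanishes. Conceptually there is nothing new here --- the cubic $Y^2 = X \, (X - A) \, (X + B)$ is the elliptic curve governing the $\theta$-congruent number $\sqrt{A \, B}$ for the angle with $\cos \theta = (B - A)/C$, and Corollary~\ref{coniccongruum} is the $\theta$-congruent-number analogue of the classical congruent-number dictionary recorded in Corollary~\ref{congruum}, recovered when $\theta$ is a right angle.
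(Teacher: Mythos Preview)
Your argument is correct. The $(1)\iff(2)$ step is identical to the paper's: your denominator $B\,(x_2-x_1)-A\,(x_3-x_2)$ equals $-(B\,x_1-C\,x_2+A\,x_3)$ once one uses $C=A+B$, so your $(X,Y)$ agrees with the paper's up to the harmless sign flip $Y\mapsto -Y$. (One small imprecision: under the relations of (1) that denominator factors as $(x_2-x_1)(x_3-x_2)(x_3-x_1)$, so it also vanishes when $x_1=x_2$; but that case forces $A=0$, so your conclusion stands.)

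Where you genuinely diverge is in linking (3) to the others. The paper proves $(2)\iff(3)$ by a second birational map through the elliptic curve, setting $a=(X^2+(B-A)X-AB)/Y$, $b=CX/Y$, $c=(X^2+AB)/Y$, in direct parallel with Corollary~\ref{congruum}. You instead prove $(1)\iff(3)$ by the linear change $a=x_3-x_1$, $b=x_3+x_1$, $c=2x_2$, after first reducing the area condition to $ab=C$ via $\sin\theta=2\sqrt{AB}/C$. Your route is shorter and makes the equivalence $(1)\iff(3)$ transparently elementary --- no cubic needed --- while the paper's route keeps the elliptic curve at the center and preserves the structural analogy with the classical congruent-number transformations. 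Composing your two bijections recovers a map $(2)\to(3)$ equivalent to the paper's up to the triangle symmetry $(a,b)\mapsto(b,a)$ and a global sign.
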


\begin{proof} Following the proof of Theorem \ref{conicsquares}, the quantities $A$ and $B$ are chosen so that we have the identities
\[ \begin{aligned}
x_1 & = \sqrt{\text{Disc}(t- \delta)} \\[5pt]
x_2 & = \sqrt{\text{Disc}(t)} \\[5pt]
x_3 & = \sqrt{\text{Disc}(t+\delta)}
\end{aligned} \qquad \implies \qquad \begin{aligned}
x_1^2 & = x_2^2 - A, \\[5pt]
x_3^2 & = x_2^2 + B.
\end{aligned} \]

\noindent This motivates the following transformations:
\[ \left. \begin{aligned}
X & = \dfrac {A \, B \, (x_1 - x_3)}{B \, x_1 - C \, x_2 + A \, x_3} \\[5pt]
Y & = \dfrac {A \, B \, C}{B \, x_1 - C \, x_2 + A \, x_3}
\end{aligned} \right \} \iff \left \{ \begin{aligned}
x_1 & = \dfrac {X^2 - 2 \, A \, X - A \, B}{2 \, Y} \\[5pt]
x_2 & = \dfrac {X^2 + A \, B}{2 \, Y} \\[5pt]
x_3 & = \dfrac {X^2 + 2 \, B \, X - A \, B}{2 \, Y}
\end{aligned} \right. \]

\noindent Hence $A = x_2^2 - x_1^2$ and $B = x_3^2 - x_2^2$ if and only if $Y^2 = X \, (X-A) \, (X+B)$ for some nonzero $Y$. Similarly, we have the following transformation:
\[ \left. \begin{aligned} a & = \dfrac {X^2 +(B-A) \, X - A \, B}{Y} \\[5pt] b & = \dfrac {C \, X}{Y} \\[5pt] c & = \dfrac {X^2 + A \, B}{Y} \end{aligned} \right \} \iff \left \{ \begin{aligned} X & = \dfrac {2 \, A \, B \, b}{(B-A) \, b + C \, (c-a)} \\[5pt] Y & = \dfrac {2 \, A \, B \, C}{(B-A) \, b + C \, (c-a)} \end{aligned} \right. \]

\noindent Hence $Y^2 = X \, (X-A) \, (X+B)$ for some nonzero $Y$ if and only if $a^2 - 2 \, a \, b \, \cos \, \theta + b^2 = c^2$ and $a \, b \, \sin \theta = 2 \, \sqrt{A \, B}$. \end{proof}

We explain how Corollary \ref{coniccongruum} is related to Corollary \ref{congruum}.  For the parabola $\mathcal C: \, y = x^2$ and the linear polynomial $\ell(x,y,1) = y$, we have the discriminant $\text{Disc}(t) = t$.  Then $A = B = \delta$ and $C = 2 \, \delta$ in this case, so that we have the elliptic curve $Y^2 = X^3 - \delta^2 \, X$.

In general, common differences $\delta$ for a 3-term arithmetic progression $\{ P_1, \, P_2, \, P_3 \}$ on a conic section $\mathcal C$ with respect to a linear rational map $\ell$ correspond to nontrivial rational points $(X:Y:1)$ on the Frey curve $Y^2 = X \, (X-A) \, (X+B)$.  Geometrically, we can construct a $K$-rational triangle having rational sides of length $a$, $b$, and $c$ and having a common angle $\theta$ such that the area is $\sqrt{A \, B}$.  This is very similar to the concept of a $\theta$-congruent number as defined by Fujiwara \cite{MR99c:11068}.  We have decided not to use this language in the exposition at hand because the area $\sqrt{A \, B}$ is not always a linear function of the difference $\delta$.

%\bibliography{MathSciNet}
%\bibliographystyle{plain}

\end{document}